\newtheorem{thm}{Theorem}
\newtheorem{lem}[thm]{Lemma}
\theoremstyle{remark}
\newtheorem{rem}[thm]{Remark}
\DeclareMathOperator{\Fix}{Fix}
\DeclareMathOperator{\SL}{SL}
\begin{document}

\title{On the Classification of Extremal Doubly Even
Self-Dual Codes with $2$-Transitive Automorphism Groups}

\author{
Naoki Chigira\thanks{
Department of Mathematics, 
Kumamoto University,
Kumamoto 860--8555, Japan. 
email: chigira@kumamoto-u.ac.jp},
Masaaki Harada\thanks{
Department of Mathematical Sciences,
Yamagata University,
Yamagata 990--8560, Japan. 
email: mharada@sci.kj.yamagata-u.ac.jp}
and 
Masaaki Kitazume\thanks{
Department of Mathematics and Informatics,
Chiba University,
Chiba 263--8522, Japan.
email: kitazume@math.s.chiba-u.ac.jp}
}

\maketitle

\begin{abstract}
In this note, we complete the classification of extremal
doubly even self-dual codes with $2$-transitive automorphism
groups.
\end{abstract}

\noindent
{\bf Keywords} extremal doubly even self-dual code,
automorphism group, 2-transitive group

\noindent
{\bf Mathematics Subject Classification} 94B05, 20B25

\section{Introduction}

As described in~\cite{RS-Handbook},
self-dual codes are an important class of linear codes for both
theoretical and practical reasons.
It is a fundamental problem to classify self-dual codes
of modest lengths 
and determine the largest minimum weight among self-dual codes
of that length (see~\cite{Huffman05, RS-Handbook}).
It was shown in~\cite{MS73} that
the minimum weight $d$ of a  doubly even
self-dual code of length $n$ is bounded by
$d  \le 4  \lfloor{\frac {n}{24}} \rfloor + 4$.
A doubly even self-dual code meeting the bound is called  {\em extremal}.
A common strategy for the problem whether there is an
extremal doubly even self-dual code for a given length
is to classify extremal doubly even
self-dual codes with a given nontrivial automorphism group
(see~\cite{Huffman05, RS-Handbook}).
Recently, Malevich and Willems~\cite{MW} have shown that
if $C$ is an extremal doubly even self-dual code with a $2$-transitive
automorphism group then
$C$ is equivalent to one of the extended quadratic residue codes of
lengths $8,24,32$, $48,80,104$, the second-order Reed--Muller
code of length $32$ or a putative
extremal doubly even self-dual code of length $1024$
invariant under the group $T \rtimes \SL(2,2^5)$, where
$T$ is an elementary abelian group of order $1024$.

The aim of this note is to complete the classification
of extremal doubly even self-dual codes with $2$-transitive
automorphism groups.
This is completed by excluding the open case in the above 
characterization~\cite{MW}, using Theorem~A in~\cite{CHK}.

\begin{thm}\label{main}
Let $C$ be an extremal doubly even self-dual code with a $2$-transitive
automorphism group.
Then $C$ is  equivalent to one of the the extended quadratic 
residue codes of lengths $8,24,32,48,80,104$ or
the second-order Reed--Muller code of length $32$.
\end{thm}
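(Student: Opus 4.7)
The plan is to apply Theorem~A of \cite{CHK} to rule out the only remaining case left open by the Malevich--Willems result quoted above, namely a putative extremal doubly even self-dual code $C$ of length $1024$ whose automorphism group contains $G=T\rtimes\SL(2,2^5)$ with $T$ elementary abelian of order $2^{10}$. Note that by extremality the minimum weight would have to equal $4\lfloor 1024/24\rfloor+4=172$.

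First I would pin down the action. Identifying $T$ with $\FF_{2^5}^{\,2}$ as the standard $\SL(2,2^5)$-module realises $G$ as the affine group of $T$, with $T$ acting regularly on the $1024$ coordinates by translations and the point stabiliser $\SL(2,2^5)$ acting faithfully and transitively on $T\setminus\{\0\}$; this accounts for the $2$-transitivity. Having coordinatised the action, one has explicit access to the cycle structure on $\{1,\ldots,1024\}$ of any element of~$G$.

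Next I would select an automorphism $\sigma\in G$ of prime order whose cycle type on the $1024$ coordinates matches the hypothesis of Theorem~A of \cite{CHK}. The most natural candidate is a nonidentity element $t\in T$, which acts fixed-point freely with cycle type $2^{512}$; the alternative candidates are elements of $\SL(2,2^5)$ of odd prime order $p\in\{3,11,31\}$ whose cycle type on $T=\FF_{2^5}^{\,2}$ is read off from the eigenstructure of $p$-elements of $\SL(2,2^5)$ on the standard module. For whichever $\sigma$ fits the hypothesis, Theorem~A of \cite{CHK} forces structural constraints on $\Fix(\sigma)\subset C$ (e.g.\ on its dimension and on the minimum weight of the associated quotient code), and the contradiction would come from a direct numerical check that these constraints are incompatible with the extremal minimum weight $172$ at length~$1024$.

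The main obstacle is the selection of $\sigma$: one must identify an element of $G$ whose cycle structure falls exactly within the scope of Theorem~A and then verify, by an arithmetic comparison of weight-enumerator data, that the resulting constraint is violated. Once the right prime-order automorphism is isolated, the remaining bookkeeping should reduce to invoking the dimension/weight bounds of Theorem~A against the weights of an extremal code of length $1024$.
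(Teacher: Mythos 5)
Your reduction to the single open case of length $1024$ with $G=T\rtimes\SL(2,2^5)$ matches the paper's starting point, but the way you propose to invoke Theorem~A of \cite{CHK} misreads that theorem, and as a result the actual contradiction is never produced. Theorem~A is not a statement about prime-order automorphisms, cycle types, fixed subcodes and quotient codes in the Conway--Pless/Huffman style; it says that any self-orthogonal code $C$ invariant under $G$ satisfies $C\subset C(G,\Omega)=\langle \Fix(\sigma)\mid\sigma\in I(G)\rangle^{\perp}$, where $I(G)$ is the set of involutions of $G$ and $\Fix(\sigma)$ is the \emph{set of coordinates fixed by $\sigma$}, regarded as a vector. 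Dualizing, for a self-dual $C$ every such fixed-point set is itself a codeword of $C$. That inclusion is the engine of the proof, and your outline never engages it: you instead treat $\Fix(\sigma)$ as the subcode of $C$ fixed by $\sigma$ and look for ``structural constraints'' of the classical kind.

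Concretely, an involution of $\SL(2,2^5)$ is a transvection in characteristic $2$ and fixes pointwise exactly a line of $\FF_{2^5}^{2}$, i.e.\ $32$ of the $1024$ coordinates. Choosing two involutions $\sigma,\tau$ whose fixed lines are complementary gives $|\Fix(\sigma)\cap\Fix(\tau)|=1$, so the two vectors $\Fix(\sigma),\Fix(\tau)$, which by Theorem~A must both lie in $C$, have inner product $1$; this contradicts self-orthogonality and in fact excludes \emph{any} self-dual code invariant under $T\rtimes\SL(2s,2^r)$, extremal or not. (Alternatively, as the paper remarks, $\Fix(\sigma)$ would be a nonzero codeword of weight $32<172$, contradicting extremality directly.) Your proposal instead hunts for a prime-order element --- a fixed-point-free $t\in T$ of type $2^{512}$, or odd-order elements of $\SL(2,2^5)$ --- whose classical fixed-code/quotient-code constraints are to clash ``numerically'' with minimum weight $172$; you neither carry out that check nor give a reason it should succeed, and Malevich and Willems left this case open precisely because such standard automorphism arguments do not dispose of it. As written, the decisive step of your proof is missing.
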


\section{Proof of Theorem~\ref{main}}

For an $n$-element set $\Omega$, 
the power set ${\cal P}(\Omega)$ 
-- the family of all subsets of $\Omega$ --
is regarded as an $n$-dimensional binary vector space 
with the inner product $(X, Y) \equiv |X \cap Y| \pmod 2$
for $X,Y \in {\cal P}(\Omega)$.
The {\em weight} of $X$ is defined to be the integer $|X|$. 
A subspace $C$ of ${\cal P}(\Omega)$ 
is called a {\em code} of length $n$. 
Note that all codes in this note are binary.
The {\em dual code} $C^\perp$ of $C$ is 
the set of all $X \in {\cal P}(\Omega)$ 
satisfying $(X, Y)=0$ for all $Y\in C$. 
A code $C$ is said to be 
{\em self-orthogonal} if $C \subset C^\perp$, and 
{\em self-dual} if $C = C^\perp$. 
A {\em doubly even} code is a code 
whose codewords have weight a multiple of $4$.

Let $G$ be a 
permutation group on an $n$-element set $\Omega$.
We define the code $C(G,\Omega)$ by
\[
C(G,\Omega)= 
\langle \Fix(\sigma) \mid \sigma \in I(G)\rangle^\perp,
\]
where 
$I(G)$ denotes the set of involutions of $G$
and $\Fix(\sigma)$ is the set of fixed points of $\sigma$ on $\Omega$.

\begin{thm}[Chigira, Harada and Kitazume~\cite{CHK}]
\label{thm:CHK}
Let $C$ be a binary self-orthogonal code of length $n$
invariant under the group $G$.
Then $C \subset C(G,\Omega)$.
\end{thm}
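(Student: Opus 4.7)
The plan is to invoke the Malevich--Willems classification to reduce to a single open case, and then use Theorem~\ref{thm:CHK} to rule that case out. By~\cite{MW}, any $C$ satisfying the hypothesis is equivalent either to one of the codes listed in the theorem (in which case we are done) or to a putative extremal doubly even self-dual code of length $1024$ invariant under a subgroup $H = T \rtimes \SL(2,2^5)$, where $T$ is elementary abelian of order $1024$. The whole proof therefore reduces to showing that this second possibility cannot occur.

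Suppose it does. Identify $\Omega$ with $\FF_{32}^{2}$ so that $T$ acts by translation and $\SL(2,32)$ by its natural linear action. Since $C$ is self-dual (in particular self-orthogonal) and $H$-invariant, Theorem~\ref{thm:CHK} gives
\[
C \;\subseteq\; C(H,\Omega) \;=\; V^{\perp},
\qquad V \;=\; \bigl\langle \Fix(\sigma) : \sigma \in I(H)\bigr\rangle .
\]
Taking orthogonal complements and using $C = C^{\perp}$ then yields $V \subseteq C$; in words, the characteristic vector of $\Fix(\sigma)$ is a codeword of $C$ for every involution $\sigma \in H$.

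This already forces the desired contradiction. Any transvection $s \in \SL(2,32) \subseteq H$ is an involution whose fixed point set on $\FF_{32}^{2}$ is $\ker(s-1)$, a $1$-dimensional $\FF_{32}$-subspace of $\FF_{32}^{2}$ and hence a subset of $\Omega$ of cardinality $32$. Its characteristic vector is therefore a nonzero codeword of $C$ of weight $32$, which is incompatible with the extremal minimum weight $4\lfloor 1024/24\rfloor + 4 = 172$. No further technical calculation is required; the only step where one must be careful is the dualisation $V \subseteq C$ from Theorem~\ref{thm:CHK} via the self-duality of $C$, after which the contradiction against the extremal bound is immediate.
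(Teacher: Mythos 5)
Your proposal does not prove the statement in question. The statement to be established is Theorem~\ref{thm:CHK} itself --- that every $G$-invariant self-orthogonal code $C$ satisfies $C \subset C(G,\Omega)$ --- but your argument \emph{invokes} Theorem~\ref{thm:CHK} as a known tool (``Theorem~\ref{thm:CHK} gives $C \subseteq C(H,\Omega)$'') and instead sketches a proof of the paper's main classification result, Theorem~\ref{main}. As a proof of Theorem~\ref{thm:CHK} this is circular, and nothing in your text addresses the actual content, namely why the fixed-point set of every involution must be orthogonal to every codeword. What is needed is the following short argument. Let $X \in C$ and $\sigma \in I(G)$. Since $C$ is $G$-invariant, $\sigma(X) \in C$, and self-orthogonality gives $|X \cap \sigma(X)| \equiv 0 \pmod 2$. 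The set $X \cap \sigma(X)$ is $\sigma$-invariant and contains $X \cap \Fix(\sigma)$ (a fixed point lies in $X$ if and only if it lies in $\sigma(X)$), while $(X\cap\sigma(X))\setminus\Fix(\sigma)$ is a union of $\sigma$-orbits of size $2$ and so has even cardinality. Hence $|X \cap \Fix(\sigma)| \equiv |X \cap \sigma(X)| \equiv 0 \pmod 2$, i.e.\ $(X,\Fix(\sigma))=0$ for all involutions $\sigma$, which is exactly $X \in \langle \Fix(\sigma) \mid \sigma \in I(G)\rangle^\perp = C(G,\Omega)$.

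As a secondary point, the argument you do give is a legitimate route to Theorem~\ref{main} and differs from the paper's Lemma~3: the paper produces two involutions whose fixed subspaces meet only in the zero vector, so that $C(G,T)^\perp$ fails to be self-orthogonal and \emph{no} self-dual code invariant under $G$ exists, whereas you use a single transvection whose fixed-point set has cardinality $32$ and compare with the extremal minimum weight $4\lfloor 1024/24\rfloor+4=172$. Your dualisation step $V \subseteq C$ is correct, and this weaker conclusion (excluding only extremal codes) does suffice for Theorem~\ref{main}; it is essentially the observation the authors themselves record in their Remark. But none of this bears on the statement you were asked to prove, so the proposal as written has a fundamental gap.
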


By using Theorem~\ref{thm:CHK},
some self-dual codes invariant under sporadic almost 
simple groups were constructed in~\cite{CHK}.
In this note, we apply Theorem~\ref{thm:CHK} to 
a family of 2-transitive groups containing the group
$(2^{10}) \rtimes \SL(2,2^5)$. 

Let $r, s$ be positive integers. 
We consider the following group $G$ 
$$ G = T \rtimes H \quad 
(T = (2^r)^{2s}, H = \SL(2s, 2^r)),  $$
where the group $T$ is regarded as the natural module $GF(2^r)^{2s}$ of $H$. 
Here $T$ acts regularly on $T$ itself and $H$ acts on $T$ as
the stabilizer of the unit of $T$, which is regarded as
the zero vector of $GF(2^r)^{2s}$.
Then $G$ naturally acts 2-transitively on $T$. 

\begin{lem}
There is no self-dual code of length $2^{2rs}$
invariant under $G = T \rtimes H$. 
\end{lem}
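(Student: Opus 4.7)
The plan is to combine Theorem~\ref{thm:CHK} with a self-orthogonality obstruction. Suppose for contradiction that $C$ is a self-dual code of length $2^{2rs}$ invariant under $G$. Theorem~\ref{thm:CHK} gives $C \subseteq C(G,T)$; since $C = C^\perp$, taking orthogonal complements yields $W := \langle \Fix(\sigma) \mid \sigma \in I(G)\rangle \subseteq C = C^\perp$, so $W$ must itself be self-orthogonal. It therefore suffices to exhibit two involutions $\sigma_1,\sigma_2 \in I(G)$ with $|\Fix(\sigma_1) \cap \Fix(\sigma_2)|$ odd.

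To produce these involutions, I would decompose $T = V_1 \oplus V_2$ as a direct sum of two complementary $s$-dimensional $GF(2^r)$-subspaces. For each $i \in \{1,2\}$, fix a $GF(2^r)$-linear isomorphism $\phi_i : V_{3-i} \to V_i$, and let $h_i \in H$ be the $GF(2^r)$-linear map that fixes $V_i$ pointwise and sends $v \in V_{3-i}$ to $v + \phi_i(v)$. Each $h_i$ is unipotent (consisting of $s$ Jordan blocks of size $2$), hence an involution in $\SL(2s,2^r)$, and the identities $\Fix(h_i) = V_i$ and $\mathrm{Image}(1+h_i) = V_i$ are straightforward. Consequently, for any $t_i \in V_i$ the pair $\sigma_i := (t_i,h_i) \in G$ is an involution, and its fixed-point set $\Fix(\sigma_i) = (1+h_i)^{-1}(t_i)$ is a coset of $V_i$ in $T$.

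Finally, because $V_1 \oplus V_2 = T$, any coset of $V_1$ meets any coset of $V_2$ in exactly one point: the equation $a_1 + u_1 = a_2 + u_2$ with $u_i \in V_i$ has a unique solution. Hence $|\Fix(\sigma_1) \cap \Fix(\sigma_2)| = 1$, so the inner product $(\Fix(\sigma_1),\Fix(\sigma_2))$ equals $1$, contradicting self-orthogonality of $W$. The only piece of real bookkeeping is the Jordan-form verification that $\mathrm{Image}(1+h_i) = \Fix(h_i)$ for involutions of ``maximal type,'' which is what lets $t_i$ range freely over $V_i$ without $\Fix(\sigma_i)$ becoming empty; beyond that the argument is linear-algebraic and uniform in $r$ and $s$, in particular covering the case $r=5$, $s=1$ that is needed for Theorem~\ref{main}.
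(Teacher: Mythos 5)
Your proof is correct and follows essentially the same route as the paper: apply Theorem~\ref{thm:CHK}, exhibit two involutions of $G$ whose fixed-point sets are (cosets of) complementary $s$-dimensional subspaces and hence meet in exactly one point, and conclude that $\langle \Fix(\sigma)\mid\sigma\in I(G)\rangle$ cannot be self-orthogonal. The only difference is presentational: you construct the involutions $h_i$ explicitly and allow a translation part $t_i$, whereas the paper takes $\sigma,\tau\in H$ with $T=\Fix(\sigma)\oplus\Fix(\tau)$ and leaves their existence to Jordan-form considerations.
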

\begin{proof}
By the fundamental theory of Jordan canonical forms in
basic linear algebra,
the dimension of the subspace of $GF(2^r)^{2s}$
spanned by the vectors fixed by
an involution in $H = \SL(2s, 2^r)$ is equal to or greater than $s$.
Then it is easily seen that
there exist two involutions $\sigma, \tau$ in $H$
such that each of them fixes some $s$-dimensional subspace
of $GF(2^r)^{2s}$, and
the zero vector is the only vector fixed  by both of them
(i.e.\ $T = \Fix(\sigma)\oplus\Fix(\tau)$).
As codewords in $C(G,\Omega)^\perp$, the inner product
$(\Fix(\sigma), \Fix(\tau))$ is equal to $1$,
since $|\Fix(\sigma)\cap \Fix(\tau)|=1$.
This yields that $C(G, T)^\perp$ is not self-orthogonal.

Suppose that 
$B$ is a self-dual code invariant under $G$.
By Theorem~\ref{thm:CHK}, $B \subset C(G, T)$.
Since $B^\perp \supset C(G, T)^\perp$ and $B=B^\perp$,
$C(G, T)^\perp$ is self-orthogonal.
This is a contradiction.
\end{proof}

%
%

The case $(r,s)=(5,1)$ in the above lemma completes the 
proof of Theorem~\ref{main}.

\begin{rem}
In the above proof, the cardinality of the fixed subspace of dimension $s$ 
is $2^{rs}$, which is smaller than the value
$ 4  \lfloor{\frac {2^{2rs}}{24}} \rfloor + 4$,
except for the cases $(r, s) = (1, 2), (2, 1)$. 
This shows immediately that 
there is no extremal doubly even self-dual code of length $2^{2rs}$ 
invariant under the group $G = T \rtimes \SL(2s, 2^r)$
if $rs >2$. 

On the other hand, 
the smallest cardinality of the fixed subspace of an involution
in $\SL(2s-1, 2^r)$ is $2^{rs}$.
If $s >1$ then this number is smaller than the value 
$ 4  \lfloor{\frac {2^{(2s-1)r}}{24}} \rfloor + 4$,
except for the small cases
$(r, s) = (1, 2), (1,3), (2, 2)$.
When $(r, s) = (1, 2)$ or $(1,3)$, the code $C(G, T)$,
for $G = T \rtimes \SL(2s-1, 2^r)$ where $T= (2^r)^{2s-1}$,
is equivalent to
the extended Hamming code of length $8$, or
the second-order Reed--Muller code of length $32$
(see~\cite[Example 2.10]{CHK}), respectively.
For the remaining case $(r, s) = (2, 2)$
(i.e.\ $G = T \rtimes \SL(3, 2^2)$, $T=2^6$),
the smallest cardinality of the fixed subspace of an involution
is $16\ (>12)$,
and so such an argument does not work.
(Indeed the code $C(G, T)^\perp$ is self-orthogonal
with minimum weight $16$.)
\end{rem}

\bigskip
\noindent {\bf Acknowledgment.}
This work is supported by 
JSPS KAKENHI Grant Numbers 23340021, 24340002, 24540024.


\end{document}